\documentclass{article}
\linespread{1.6}
\usepackage{amssymb}
\usepackage{amsmath}
\usepackage{arxiv}
\usepackage{amsthm}
\usepackage{enumerate}
\usepackage{graphicx}
\usepackage{subcaption}
\usepackage[export]{adjustbox}
\usepackage{booktabs}
\usepackage{parskip}
\usepackage{xfrac}
\usepackage{textcomp}
\usepackage[utf8]{inputenc}

\usepackage{newunicodechar}

\DeclareUnicodeCharacter{0412}{\dash}
\DeclareRobustCommand\dash{%
  B}

\usepackage{url}            
\usepackage{amsfonts}       
\usepackage{csquotes}

\newtheorem{theorem}{Theorem}

\newtheorem{definition}{Definition}[section]

\newtheorem{proposition}{Proposition}

\title{ Commutator subgroup of Sylow 2-subgroups of alternating group and Miller-Moreno groups as bases of new Key Exchange Protocol }

\author{
Ruslan V. Skuratovskii \\
  NTUU 'Igor Sikorsky Kyiv Polytechnic Institute'\\
  \texttt{r.skuratovskii@kpi.ua}\\
  \texttt{ruslcomp@mail.ru} \\
   \And
 Aled Williams \\
  School of Mathematics\\
  Cardiff University\\
  Cardiff, UK\\
  \texttt{williamsae13@cardiff.ac.uk} \\
}

\begin{document}

\maketitle

\begin{abstract}
The goal of this investigation is effective method of key exchange which based on non-commutative group $G$. The results of Ko et al. \cite{kolee} is improved and generalized.
The size of a minimal generating set for the commutator subgroup of Sylow 2-subgroups of alternating group is found. The structure of the commutator subgroup of Sylow 2-subgroups of the alternating group ${A_{{2^{k}}}}$ is investigated and used in key exchange protocol which based on non-commutative group.

We consider non-commutative generalization of CDH problem
\cite{gu2013new, bohli2006towards} on base of metacyclic group  of Miller-Moreno type (minimal non-abelian group). We show that conjugacy problem in this group is intractable. Effectivity of computation is provided due to using groups of residues by modulo $n$. The algorithm of generating (designing) common key in non-commutative group with 2 mutually commuting subgroups is constructed by us.
\end{abstract}

\textbf{Key words}: the commutator subgroup of Sylow $2$-subgroups, metacyclic group, conjugacy key exchange scheme, finite group, conjugacy problem.  \\
\textbf{2000 AMS subject classifications}: 20B27, 20B22, 20F65, 20B07, 20E45.

\section{Introduction}
In this paper new conjugacy key exchange scheme is proposed. This protocol based on conjugacy problem in non-commutative group \cite{anshel1999algebraic, bohli2006towards, gu2013new, gu2014conjugacy, skuratovskii2019employment}. We slightly generalize Ko Lee’s \cite{kolee} protocol of key exchange. Public key cryptographic schemes based on the new systems are established. The conjugacy search problem in a group $G$ is the problem of recovering an $(a \in G)$ from given $(w \in G)$ and $h = a^{-1}wa$. This problem is in the core of several recently suggested public key exchange protocols. One of them is most notably due to Anshel, Anshel, and Goldfeld \cite{anshel1999algebraic} and another due to Ko et al. \cite{kolee}. As we know if CCP problem is tractable in $G$ then problem of finding $w^{ab}$ by given $w$, $w^{a}=a^{-1} w a$, $w^{b}=b^{-1} w b$ for an arbitrary fixed $w \in G$ such that is not from center of $G$, $w^{ab}$ is the common key that Alice and Bob have to generate.

Recently, a novel approach to public key encryption based on the algorithmic difficulty of solving the word and conjugacy problems for finitely presented groups has been proposed in \cite{anshel2001new, anshel1999algebraic}. The method is based on having a canonical minimal length form for words in a given finitely presented group, which can be computed rather rapidly, and in which there is no corresponding fast solution for the conjugacy problem. A key example is the braid group.

We denote by $w^x$ the conjugated element $u=x^{-1} w x$. We show that efficient algorithm that can distinguish between two probability distributions of $\left(w^{x}, w^{y}, w^{x y}\right)$ and $\left(w^{g}, w^{h}, w^{gh}\right)$ does not exist. Also, an efficient algorithm which recovers $w^{xh}$ from $w$, $w^x$ and $w^y$ does not exist. This group has representation $$G=\left\langle a, b | a^{p^{m}}=e, b^{p^{n}}=e, b^{-1} a b=a^{1+p^{m-1}}, m \geq 2, n \geq 1\right\rangle.$$
As a generators $a,b$ can be chosen two arbitrary commuting elements \cite{raievska2016finite, skuratovskii2019employment, otmani2010cryptanalysis}.

Consider non-metacyclic group of Millera Moreno. This group has representation
$$G=\left\langle a, b \big| | c | = p, |a| = p^m, |a| = p^n, m \ge 1, n \ge 1, b^{-1} ab = ac, b^{-1}cb = c \right\rangle.$$

To find a length of orbit of action by conjugation by $b$ we consider the class of conjugacy of elements of form $a^j c^i$. This class has length $p$ because of action $b^{-1} a^{j} c^{i} b=a^{j+1} c^{i}, \ldots,$ as well as $b^{-1} a^{j} c^{i+p-1} b=a^{j} c^{i+p}=a^{j} c^{i}$ increase the power of $c$ on 1. Thus, the first repetition of initial power $j$ in $a^j c^i$ occurs though $n$ conjugations of this word by $b$, where $1 \le j \le p$. Therefore, the length of the orbit is $p$.

We need to have an effective algorithm for computation of conjugated elements, if we want to design a key exchange algorithm based on non-commutative DH problem \cite{gu2014conjugacy}. Due to the relation in metacyclic group, which define the homomorphism $\varphi: \left\langle b \right\rangle \rightarrow \operatorname{Aut}(  \left\langle a \right\rangle)$ to the automorphism group of the $B =  \left\langle b \right\rangle$, we obtain a formula for finding a conjugated element. Using this formula, we can efficiently calculate the conjugated to element by using the raising to the $1 + p^{m-1}$-th power, where $m > 1$.

There is effective method of checking the equality of elements due to cyclic structure of group $A =  \left\langle a \right\rangle$ and $B =  \left\langle b \right\rangle$ in this group $G$.

We have an effective method of checking the equality of elements in the additive group $Z_n$ because of reducing by finite modulo $n$.

\section{Proof that conjugacy problem is $\mathcal{NP}$-hard in $G$. Size of a conjugacy class}
The orbit of the given base element $w \in G$ must must be long enough if we want to have problem of DL or equally problem of conjugacy in non-commutative group $G$ like $\mathcal{NP}$-hard problem.

Let elements of $G$ act by conjugation on $w \in G$, where $w \notin Z(G)$.

\begin{theorem}
The length of conjugacy class of non-central element $w$ is equal to $p$.
\end{theorem}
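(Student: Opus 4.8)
The plan is to exploit the fact that $G$ is a minimal non-abelian (Miller-Moreno) $p$-group, whose commutator subgroup $G' = \langle c\rangle$ is central of order exactly $p$. First I would pin down this structure from the presentation: the relation $b^{-1}ab = ac$ rewrites as $a^{-1}b^{-1}ab = c$, so $c = [a,b]$ is a commutator, while $b^{-1}cb = c$ (together with $a^{-1}ca = c$, which holds since $c$ is a power of the central commutator) shows $c \in Z(G)$. Since $G/\langle c\rangle$ is abelian, $\langle c\rangle$ contains $G'$; conversely $c \in G'$, so $G' = \langle c\rangle$ and $|G'| = |c| = p$.

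With this in hand, the containment half is immediate. For any $w$ and any $g \in G$ the identity $g^{-1}wg = w\,(w^{-1}g^{-1}wg) = w\,[w,g]$ places every conjugate of $w$ inside the coset $wG'$. Because $G' = \langle c\rangle$ is central of order $p$, this coset has exactly $p$ elements, so the conjugacy class $\mathrm{Cl}(w)$ satisfies $|\mathrm{Cl}(w)| \le p$. The explicit computation already recorded in the excerpt, namely $b^{-k}a^{j}c^{i}b^{k} = a^{j}c^{\,i+jk}$, is the concrete realization of this: conjugation by $b$ merely translates the central $c$-exponent.

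It remains to show the class has size exactly $p$ rather than smaller, and here two routes are available. The first is purely group-theoretic: $|\mathrm{Cl}(w)| = [G : C_G(w)]$ is the index of a subgroup of the $p$-group $G$, hence a power of $p$; since it is at most $p$ by the previous paragraph and strictly greater than $1$ because $w \notin Z(G)$, it must equal $p$. The second route is to observe that $g \mapsto [w,g]$ is a homomorphism $G \to G'$ (using $[w,g]\in Z(G)$, so that $[w,gh] = [w,g][w,h]$); its image is a subgroup of $G' \cong \mathbb{Z}_p$, nontrivial precisely when $w \notin Z(G)$, and the only nontrivial subgroup of $\mathbb{Z}_p$ is all of it, giving $|\mathrm{Cl}(w)| = |G'| = p$.

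The only genuine obstacle is the mild bookkeeping of the first paragraph: one must verify that $c$ really is central and that $G'$ is cyclic of order exactly $p$, so that $G'$ has no proper nontrivial subgroup, since the whole argument collapses the class size into $|G'|$. Once the Miller-Moreno structure $G' = \langle c\rangle \le Z(G)$ with $|G'| = p$ is confirmed, every non-central element has class size equal to $p$, uniformly and with no case analysis on whether $w$ involves $a$, $b$, or $c$.
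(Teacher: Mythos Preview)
Your argument is correct and takes a genuinely different route from the paper. The paper works inside the \emph{metacyclic} presentation $b^{-1}ab=a^{1+p^{m-1}}$ and computes the orbit of $w$ under conjugation by powers of $b$ directly: it asks for the least $s$ with $(1+p^{m-1})^{s}\equiv 1 \pmod{p^{m}}$ and uses the binomial expansion to conclude $s=p$. That is an explicit calculation of one orbit length, and strictly speaking it only treats conjugation by elements of $\langle b\rangle$ acting on $w\in\langle a\rangle$; the passage to arbitrary $w\notin Z(G)$ and arbitrary conjugators is left implicit.

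Your approach instead uses the structural fact, valid for every Miller--Moreno $p$-group (metacyclic or not), that $G'=\langle [a,b]\rangle$ is central of order exactly $p$. From $g^{-1}wg=w[w,g]$ you get $\mathrm{Cl}(w)\subseteq wG'$, hence $|\mathrm{Cl}(w)|\le p$, and then the index $[G:C_G(w)]$ is a $p$-power strictly between $1$ and $p$, forcing equality. This buys you uniformity: no case analysis on the shape of $w$, no binomial estimates, and the argument covers both presentations at once. The paper's computation, on the other hand, yields the explicit description of the orbit, which is what one actually needs when implementing the conjugation in the protocol.

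One small point to tighten: in your first paragraph you justify $a^{-1}ca=c$ by saying $c$ is ``a power of the central commutator'', which is circular since centrality of $c$ is exactly what you are proving. In the non-metacyclic Miller--Moreno presentation the relation $[a,c]=1$ is part of the defining data (the paper's display omits it, but it is implicit in R\'edei's classification); alternatively, one can argue abstractly that in any minimal non-abelian $p$-group the derived subgroup lies in the center. Either way the fix is routine, and the rest of your proof stands.
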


\begin{proof}
Recall the inner automorphism in $G$ is determined by the formula $b^{-1} a b=a^{1+p^{m-1}}$. Let us recall the structure of minimal non-abelian Metacyclic group, namely $G=B\ltimes_{\varphi} A$, where $A =  \left\langle a \right\rangle$ and $B =  \left\langle b \right\rangle$ are finite cyclic groups. Therefore, the formula $b^{-1}ab=a^{1+p^{m-1}}$ defines a homomorphism $\varphi$ in the subgroup of inner automorphisms $\operatorname{Aut}(  \left\langle a \right\rangle)$. It is well-known that each finite cyclic group is isomorphic to the correspondent additive cyclic group modulo $n$ residue $Z_n$. In this group equality of elements can be checked effectively due to reducing the elements of the module group.

Consider the orbit of element $w$ under action by conjugation. The length of such orbit can be found from equality $w^{(1+p^{m-1})^s} = w$ as minimal power $s$ for which this equality will be true. We apply Newton binomial formula to the expression $\left( 1+p^{m-1}\right) \equiv 1\left( \text{mod } p^{m}\right)$ and taking into account the relation $a^{p^m} = e$. We obtain
$$ 1 + C^1_s p^{m-1} + 1 + C^2_s p^{2(m-1)} + \dots + p^{s(m-1)} \equiv 1\left( \text{mod } p^{m}\right)$$
only if $s \equiv p^l (\text{mod } p^m)$ with $l < m$ because $1 + C^1_s p^{m-1} = 1 + sp^{m-1} \not\equiv 1 (\text{mod } p^s)$ if $s < p$. It means that the minimal $s$ when this congruence start to holds is equal to $p$. The prime number $p$ can be chosen as big as we need \cite{vinogradov2016elements} which completes the proof.
\end{proof}

Let us evaluate the size of subsets $S_1, S_2$ with mutually commutative elements. Each of this subset of generated by them subgroups $H_1, H_2$ can be chosen as the subgroups of center of group $G$. It is well-known that the semidirect product is closely related to wreath product. The center of the wreath product with non-faithful action were recently studied \cite{BDNU7575}.

\begin{proposition}
As it was proved by the author a center of the restricted wreath product with $n$ non-trivial coordinates $(A,X) \wr B$ is direct product of normal closure of center of diagonal of $Z(B^n)$, i.e. $(E \times Z(\Delta(B^n)))$, trivial an element, and intersection of $(K) \times E$ with $(A)$. In other words,
$$Z(\left( A,X\right) \wr B = \langle (1; \underbrace{h, h, \ldots, h}_{n}), e (Z(A) \cap Z(K, X)) \wr E \rangle \simeq \langle Z(A) \cap K ) \times Z(\Delta (B^n) \rangle$$
where $h \in Z(B), |X| = n$.

\end{proposition}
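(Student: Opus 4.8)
The plan is to compute the centralizing conditions directly from the semidirect-product structure of the wreath product. Write $W = (A,X)\wr B = B^{n}\rtimes A$, where $A$ acts on $X$ with $|X|=n$ and thereby permutes the $n$ coordinates of the base group $B^{n}=B^{X}$; denote by $\pi_{a}\in\mathrm{Sym}(X)$ the permutation induced by $a\in A$, by $K=\ker(A\to\mathrm{Sym}(X))$ the kernel of this action, and by ${}^{a}\vec c$ the result of permuting the coordinates of $\vec c\in B^{n}$ by $\pi_a$. An arbitrary element is $w=(a;\vec b)$ with $\vec b=(b_1,\dots,b_n)$, and the multiplication rule reads $(a;\vec b)(a';\vec b')=(aa';\,\vec b\cdot{}^{a}\vec b')$. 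Since $A$ and $B^{n}$ together generate $W$, the element $w$ is central if and only if it commutes with every $(a';\vec 1)$ and with every $(e;\vec c)$; I would split the proof into these two families of relations.

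First I would impose commutation with the base $B^{n}$, that is $(a;\vec b)(e;\vec c)=(e;\vec c)(a;\vec b)$ for all $\vec c\in B^{n}$, which expands to $\vec b\cdot{}^{a}\vec c=\vec c\cdot\vec b$. Taking $\vec c$ supported on a single coordinate and comparing the two sides position by position forces $\pi_a$ to fix every coordinate, hence $a\in K$; once $\pi_a$ is trivial the same relation collapses to $\vec b\,\vec c=\vec c\,\vec b$ for all $\vec c$, so $\vec b\in Z(B^{n})=Z(B)^{n}$, i.e. each $b_i\in Z(B)$. This coordinate-chasing step — deducing $a\in K$ rather than merely $a\in Z(A)$ — is the crux of the argument and the main obstacle, since it is precisely where the (non-faithful) permutation action is pinned down.

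Next I would impose commutation with the top group $A$, that is $(a;\vec b)(a';\vec 1)=(a';\vec 1)(a;\vec b)$ for all $a'\in A$, which yields $aa'=a'a$ and $\vec b={}^{a'}\vec b$. The first condition gives $a\in Z(A)$, and combined with the previous step $a\in Z(A)\cap K$. The second says $\vec b$ is invariant under every $\pi_{a'}$; using transitivity of the action on the $n$ coordinates (which underlies the single-diagonal form of the answer), invariance forces $\vec b$ to be constant, $\vec b=(h,\dots,h)$, and together with $b_i\in Z(B)$ this gives $h\in Z(B)$. Hence every central element has the form $(a;h,\dots,h)$ with $a\in Z(A)\cap K$ and $h\in Z(B)$, i.e. it lies in $\langle(1;h,\dots,h),\,(a;\vec 1)\rangle$ with the stated generators.

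Finally I would verify the converse and read off the isomorphism type. A direct substitution (using $a\in K$ so that ${}^{a}\vec c=\vec c$, that $h\in Z(B)$, and that diagonal tuples are $\pi_{a'}$-invariant) shows each such $(a;h,\dots,h)$ indeed commutes with both $A$ and $B^{n}$, so the description is exact. The assignment $(a,h)\mapsto(a;h,\dots,h)$ is then an injective homomorphism — the two factors do not interact because $a\in K$ acts trivially on the diagonal — so the center is the internal direct product $(Z(A)\cap K)\times Z(\Delta(B^{n}))$, where $Z(\Delta(B^{n}))=\{(h,\dots,h):h\in Z(B)\}\cong Z(B)$ is the center of the diagonal subgroup, exactly as claimed.
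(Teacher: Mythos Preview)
The paper does not actually prove this proposition: it is asserted with the phrase ``As it was proved by the author'' and a pointer to earlier work, and the text then moves on to apply the result to semidirect products. So there is no in-paper argument to compare your attempt against.

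Your direct computation is correct and is the standard way to obtain the center of a permutational wreath product. Testing commutation of $(a;\vec b)$ against base elements $\vec c$ supported on a single coordinate to force $\pi_a=\mathrm{id}$, hence $a\in K$, and then $\vec b\in Z(B)^n$, is exactly the right first move; testing against the top group then yields $a\in Z(A)$ and the $A$-invariance of $\vec b$. You are also right to flag that the conclusion $\vec b=(h,\dots,h)$ uses transitivity of the $A$-action on $X$; without it one only gets that $\vec b$ is constant on $A$-orbits, and the center becomes a product of diagonals over the orbits rather than a single $Z(\Delta(B^n))$. Since the proposition as stated in the paper does not make this hypothesis explicit (though the single-diagonal form of the answer presupposes it), it would be worth recording it as an assumption in your write-up. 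With that caveat, your argument both establishes the inclusion and the converse, and the internal direct product decomposition follows exactly as you describe.
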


Taking into consideration that a semidirect product is the partial case of wreath product the diagonal of $B^n$ degenerates in $B$. Thus, we obtain such formula for the center of semidirect product:
$$Z\left( \left( A,X\right) \rtimes B\right) =\langle Z(1 ; h), e, (Z(A) \cap K, X) \wr E \rangle \simeq \langle Z(A) \cap K) \times Z(\Delta (B^n) \rangle.$$
This structure lead to constructive method of finding elements of the center. As it was noted above the elements $x$ and $y$ are parts of elements of secret key. Therefore as greater a size of center of a considered group as greater a size of a key space of this protocol.

Also commutator subgroup of Sylow 2-subgroup of alternating groups can be used as a support of CSP problem \cite{SkVin, SkJAA, SkAr}.
\begin{definition}  For an arbitrary  $k\in \mathbb{N}$ we call a $k$-\emph{coordinate} subgroup $U<G$  a subgroup, which is determined by $k$-coordinate sets $[U]_l$, $l\in \mathbb{N}$, if this subgroup consists of all Kaloujnine's tableaux $a\in I$
for which $[a]_l \in [U]_l$.
\end{definition}
We denote by ${{G}_{k}}(l)$ a level subgroup of $G_k$, which consists of the tuples of v.p. from ${{X}^{l}}$, $l<k-1$ of any $\alpha \in G_k$.

As a sets $S_1$ and $S_2$ consisting of mutually commutative elements we can use the set of elements of $l$-\emph{coordinate} subgroup of $G_k$, where $l<k$, or the elements of ${{G}_{k}}(l)$ that is isomorphic to this subgroup.
As it was proved by the author \cite{ SkVin} the order of ${Syl}_{2} A_{2^k}$ is ${{2}^{{{2}^{k}}-k-2}}$. therefore the growth of mutually commutative sets of elements $S_1$ and $S_2$ is exponential function has.

According to \cite{redei1947schiefe} index of center of metacyclic group has index $\left| G:Z\left( G\right) \right| =p^{2}$, therefore the order of $Z(G) = p^{k-2}$. Thus, we have $p^2 - 1$ possibilities to choose an element $w$ as an element of the open key, which is in the protocol of key exchange.

\section{Key exchange protocol}
Let $S_1, S_2$ be subsets from $G$ consisting of mutually commutative elements. We make a generalisation of CDH by taking into consideration the subgroups $H_1 =  \left\langle S_1 \right\rangle$ and $H_2 =  \left\langle S_2 \right\rangle$ instead of using $S_1, S_2$. We can do this because the groups $H_1$ and $H_2$ have generating sets $S_1$ and $S_2$ which commute. Because of these mutually commutative generating sets, we know that the subgroups are additionally mutually commutative.

\section{Consideration of base steps of the protocol}
\textit{Input: Elements $w$, $w^x$ and $w^y$}.

Alice selects a private $x$ as the random element $x$ from the subgroup $H_1$ and computes $w^{x}=x^{-1} w x$. The she sends it to Bob. Bob selects a private $y$ as the random element $y$ from the subgroup $H_2$ and computes $w^x$. Then he sends it to Alice. Bob computes $\left(w^{x}\right)^{y}=w^{x y}$ and Alice computes $\left(w^{y}\right)^{x}=w^{y x}$. Taking into consideration that $H_1$ and $H_2$ are mutually commutative groups we obtain that $xy = yx$. Therefore, we have that $w^{x y}=w^{y x}$.

\textit{Output: $w^{xy}$ that is the common key of Alice and Bob}.

Thus, the common key \cite{bohli2006towards, kolee, anshel1999algebraic, anshel2001new} $w^{xy}$ was successfully generated.

\textbf{Resistance to a cryptanalysis.} But if  an analytic use for a cryptanalysis  will use for cryptoanalysys solving of conjugacy search problem the method of reduction to solving of decomposition problem \cite{Shp}, then it lead us to solving of discrete logarithm problem in the multiplicative cyclic group ${{Z}_{p}}$. This problem is NP-hard for big  $p$.

\section{Conclusion}
We can choose mutually commutative $H_1, H_2$ as subgroups of $Z(G)$. As we said above, $x, y$ are chosen from $H_1, H_2$ as components of key. According to \cite{raievska2016finite} $Z(G) = p^{n+m-2}$ so size of key-space is $O(p^{n+m-2})$. It should be noted that the size of key-space can be chosen as arbitrary big number by choosing the parameters $p,n,m$. As an element for exponenting we can choose an arbitrary element $w \in A$ but $w \ne e$, because the size of orbit in result of action of inner automorphism $\varphi$ is always not less than $p$.


\end{document}